\documentclass{amsart}

\title{Remarks on the solution map for Yudovich solutions of the Euler equations}

\author{Huy Q. Nguyen}
\address{Department of Mathematics, University of Maryland, College Park, MD 20742, USA}
\email{hnguye90@umd.edu}

\usepackage[margin=1.5in]{geometry}
\usepackage{amsmath, amsthm, amssymb, mathrsfs}
\usepackage{times}
\usepackage{color}
\usepackage{hyperref}

\usepackage{graphicx}
\usepackage{tikz}
\usetikzlibrary{decorations.pathmorphing}

\parskip=5pt
\parindent=0pt
%%%%%%%%%%%%%%%%%%%
\newcommand{\bq}{\begin{equation}}
\newcommand{\eq}{\end{equation}}
\newcommand{\bqa}{\begin{eqnarray*}}
\newcommand{\eqa}{\end{eqnarray*}}

%%%%%%%%%%%%%%%%%%

\theoremstyle{plain}
\newtheorem{theo}{Theorem}[section]

\newtheorem{lemm}[theo]{Lemma}

\theoremstyle{definition}
\newtheorem{rema}[theo]{Remark} 

%------------------------------------------------------------------------

\DeclareMathOperator{\supp}{supp}

\DeclareSymbolFont{pletters}{OT1}{cmr}{m}{sl}
\DeclareMathSymbol{s}{\mathalpha}{pletters}{`s}

%------------------------------------------------------------------------

\def\eps{\varepsilon}
\def\na{\nabla}

\def\tdm{\frac{3}{2}}

%%%%%%
%%%%%%%%%%

\def\Rr{\mathbb{R}}

\def\p{\partial}

\def\na{\nabla}
\def\wsc{\rightharpoonup}

\def\ol{\overline}
\def\T{\mathbb{T}}

\def\Om{\Omega}

\numberwithin{equation}{section}
\def\om{\omega}

\def\wsc{\overset{\ast}{\rightharpoonup}}
\def\wc{\rightharpoonup}

\pagestyle{plain}
%\linespread{2}

%version 12/7/16
\date{\today}
\begin{document}
\begin{abstract}
Consider Yudovich solutions to the incompressible Euler equations with bounded initial vorticity in bounded planar domains. We present a purely Lagrangian proof  that  the solution map is  strongly continuous in $L^p$ for all $p\in [1, \infty)$ and is weakly-$*$ continuous in $L^\infty$.
\end{abstract}

\keywords{solution map, weak continuity, Yudovich solution, Euler equations}

%\noindent\thanks{\em{ MSC Classification:  .}}

\maketitle

\section{Introduction}
Let $\Om$ be a bounded domain in $\Rr^2$ with $C^2$ boundary $\p\Om$. Let $\Delta_D$ denote the Dirichlet Laplacian associated to $\Om$. The vorticity formulation of the 2D incompressible Euler equations  is 
\bq\label{Euler}
\p_t \om+u\cdot \na \om=0,\quad(x, t)\in \Om\times (0, \infty),
\eq
where the velocity $u$ is recovered from the vorticity $\om$ through the Biot-Savart law 
\bq\label{BS}
u(x)=\na^\perp \Delta_D^{-1}\om=K*\om.
\eq
Note that $u$ given by \eqref{BS} is parallel to the boundary $\p\Om$. In the celebrated work \cite{Yu}, Yudovich proved the existence and uniqueness of global solutions to \eqref{Euler} with bounded initial vorticity. This theory includes the important class of vortex patches \cite{MB}.  
\begin{theo}[\protect{\cite{Yu, MP}}]\label{Yu}
Let $\om_0\in L^\infty(\Om)$. There exists a unique triple $(\om, u, X_t)$ solution to \eqref{Euler}  such that $\om\in L^\infty(\Rr; L^\infty(\Om))$,  $u(t)=K*\om(t)$, $X_t:\Om\to\Om$  measure-preserving, invertible and
\begin{align}\label{def:X}
&\frac{d}{dt}X_t(x)=u(X_t(x, t), t),\quad X_0(x)=x\quad\forall x\in\ol{\Om},\\
\label{form:om:0}
&\om(x, t)=\om_0(X^{-1}_t(x)).
\end{align}
Moreover, the flow $X_t:\ol{\Om}\to \ol{\Om}$ is H\"older continuous on $\ol{\Om}$ with exponent $\exp(-C|t|\|\om_0\|_{L^\infty(\Om)})$ for some $C=C(\Om)$. 
\end{theo}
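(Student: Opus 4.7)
The plan is to prove this classical theorem by the Yudovich--Marchioro--Pulvirenti strategy: a regularization argument for existence, coupled with an Osgood--type uniqueness proof. First I would mollify the initial datum to obtain $\om_0^\eps \in C^\infty_c(\Om)$ with $\|\om_0^\eps\|_{L^\infty}\le \|\om_0\|_{L^\infty}$ and $\om_0^\eps \to \om_0$ in every $L^p$ with $p<\infty$. Classical 2D Euler theory in a bounded smooth domain produces global smooth solutions $(\om^\eps, u^\eps)$ with associated flow $X_t^\eps$; since $X_t^\eps$ is measure-preserving and $\om^\eps$ is transported along it, $\|\om^\eps(t)\|_{L^\infty}=\|\om_0^\eps\|_{L^\infty}$ is uniformly bounded.

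The key a priori bound, uniform in $\eps$, is the log--Lipschitz estimate
\[
|u^\eps(x,t)-u^\eps(y,t)| \le C(\Om)\|\om_0\|_{L^\infty}\,|x-y|\bigl(1+\log^{-}|x-y|\bigr),
\]
obtained by writing the Dirichlet Green's function as the whole-space fundamental solution plus a smooth harmonic remainder and applying Calder\'on--Zygmund theory. An Osgood--Gronwall argument applied to $|X_t^\eps(x)-X_t^\eps(y)|$ then yields the uniform H\"older estimate with exponent $\exp(-C|t|\|\om_0\|_{L^\infty})$. The flows $X_t^\eps$ are thus equicontinuous on $\ol\Om \times [-T,T]$, so Ascoli--Arzel\`a extracts a subsequence converging uniformly to a H\"older continuous, measure-preserving limit $X_t$. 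Setting $\om(t):=\om_0\circ X_t^{-1}$ and $u:=K*\om$, one passes to the limit in \eqref{def:X} and \eqref{form:om:0} to recover a Yudovich solution.

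The delicate step is uniqueness. Given two solutions $(\om_i, u_i, X_t^i)$, $i=1,2$, with the same initial data, Yudovich's argument tracks the quantity
\[
Q_p(t):=\int_\Om |X_t^1(x)-X_t^2(x)|^p\,\d x
\]
for $p\ge 2$. Combining the log--Lipschitz bound above with the sharp Calder\'on--Zygmund estimate $\|K*f\|_{L^q(\Om)}\les q\|f\|_{L^q\cap L^\infty}$ for large $q$ leads, after optimizing $p$, to an Osgood differential inequality whose only solution starting from zero is identically zero; this forces $X_t^1=X_t^2$ and hence $\om_1=\om_2$. The main technical obstacle is verifying that the Biot--Savart kernel on $\Om$ inherits the $\Rr^2$ Calder\'on--Zygmund and log--Lipschitz structure, which hinges on the $C^2$ regularity of $\p\Om$ and a careful treatment of the boundary correction in the Green's function.
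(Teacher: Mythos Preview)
The paper does not prove this theorem; it is stated as a classical result and attributed to the references \cite{Yu, MP}, with the specific formulation taken from Marchioro--Pulvirenti. There is therefore no in-paper proof to compare your proposal against.

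That said, your sketch is a faithful outline of the standard Yudovich--Marchioro--Pulvirenti argument: mollify the data, use the log--Lipschitz bound on the Biot--Savart velocity (which the paper records as \eqref{LL}), extract a limiting flow by Arzel\`a--Ascoli with the uniform H\"older exponent $\exp(-C|t|\|\om_0\|_{L^\infty})$, and prove uniqueness via an Osgood-type inequality. One small comment: the uniqueness argument in \cite{MP} actually works with the $L^1$ quantity $\int_\Om |X_t^1(x)-X_t^2(x)|\,\d x$ and the kernel estimate \eqref{estK:2} (this is the same device the present paper exploits in Section~2), rather than the $L^p$ energy with $p\to\infty$ that Yudovich originally used on the Eulerian side; either route closes, but the Lagrangian $L^1$ version is the one most consonant with the framework of this paper.
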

The preceding version of Yudovich theory is taken from \cite{MP} and is elegant in that the notion of solution is naturally defined in terms of the Lagrangian flow and does not involve test functions. The purpose of this note is to present proofs of folklore about the continuity of the solution map for Yudovich solutions in this purely Lagrangian framework.

To define the inverse of the flow $X_t$, we  let  $X_{s, t}(x)$ be the solution of 
\bq\label{def:Xst}
\frac{d}{dt}X_{s, t}(x)=u(X_{s, t}(x), t),\quad X_{s, s}(x)=x.
\eq
In view of \eqref{def:X}, we denote $X_{0, t}\equiv X_t$. Then we have
\[
X_t^{-1}=X_{t, 0}
\]
 and   \eqref{form:om:0} becomes
\bq\label{form:om}
\om(x, t)=\om_0(X_{t, 0}(x)).
\eq
  We first state the continuity in time of Yudovich solutions.
\begin{lemm}\label{lemm}
For all initial data $\om_0\in L^\infty(\Om)$, the unique solution $\om$  given by Theorem \ref{Yu} belongs to $C(\Rr; L^p(\Om))\cap C_w(\Rr; L^\infty(\Om))$ for all $p\in [1, \infty)$. Here $C_w(\Rr; L^\infty(\Om))$ denotes the space of functions that are continuous in time with values in the weak-$*$ topology of $L^\infty(\Om)$.
\end{lemm}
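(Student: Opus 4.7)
The plan is to work entirely from the Lagrangian representation \eqref{form:om} supplied by Theorem \ref{Yu}. Since $X_{t,0}$ inherits measure-preservation from $X_t$, one immediately has $\|\om(t)\|_{L^p(\Om)} = \|\om_0\|_{L^p(\Om)}$ for every $p \in [1,\infty]$ and every $t \in \Rr$, which will serve as a uniform bound throughout. The central geometric fact to establish is the uniform convergence of the inverse flow: for every fixed $t_0 \in \Rr$,
\[
\|X_{t, 0} - X_{t_0, 0}\|_{L^\infty(\ol{\Om})} \longrightarrow 0 \quad \text{as } t \to t_0.
\]

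To prove this, use the flow identity $X_{t, 0} = X_{t_0, 0} \circ X_{t, t_0}$ (a direct consequence of uniqueness for \eqref{def:Xst}) together with the inverse relation $X_{t, t_0} = X_{t_0, t}^{-1}$. The Biot-Savart law \eqref{BS} applied to $\om \in L^\infty$ yields $\|u\|_{L^\infty(\Om \times \Rr)} \le C$, and integrating \eqref{def:Xst} from $t_0$ to $t$ gives $\|X_{t_0, t} - \mathrm{id}\|_{L^\infty(\ol{\Om})} \le C|t - t_0|$; inversion transfers the same estimate to $X_{t, t_0} - \mathrm{id}$. The H\"older continuity of $X_{t_0, 0}$ on $\ol{\Om}$ from Theorem \ref{Yu} then upgrades this to the required uniform convergence of $X_{t, 0}$.

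Strong continuity in $L^p$ now follows by density. For $\om_0 \in C(\ol{\Om})$, uniform continuity of $\om_0$ combined with the convergence above gives $\om(\cdot, t) \to \om(\cdot, t_0)$ uniformly on $\ol{\Om}$, hence in $L^p$. For general $\om_0 \in L^\infty(\Om)$, approximate by $\om_0^n \in C(\ol{\Om})$ in $L^p$ and invoke measure-preservation of $X_{t, 0}$ to write
\[
\|\om(t) - \om(t_0)\|_{L^p} \le 2\|\om_0 - \om_0^n\|_{L^p} + \|\om_0^n \circ X_{t, 0} - \om_0^n \circ X_{t_0, 0}\|_{L^p},
\]
then let $n \to \infty$ followed by $t \to t_0$. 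For the weak-$*$ continuity in $L^\infty$, given $\varphi \in L^1(\Om)$, approximate $\varphi$ in $L^1$ by $\varphi^n \in L^2(\Om)$ and split
\[
\int_\Om (\om(t) - \om(t_0))\varphi \, dx = \int_\Om (\om(t) - \om(t_0))(\varphi - \varphi^n) \, dx + \int_\Om (\om(t) - \om(t_0))\varphi^n \, dx;
\]
the first term is bounded by $2\|\om_0\|_{L^\infty}\|\varphi - \varphi^n\|_{L^1}$, while the second is controlled via H\"older's inequality by $\|\om(t) - \om(t_0)\|_{L^2}\|\varphi^n\|_{L^2}$, which tends to zero as $t \to t_0$ by the strong $L^2$ continuity just proved. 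The only non-mechanical step is the uniform convergence of $X_{t, 0}$; once this is in place from the boundedness of $u$ and the H\"older regularity of the flow, I do not anticipate any substantive obstacle.
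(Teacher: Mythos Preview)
Your argument is correct, and shares the paper's skeleton (approximation of $\om_0$ by continuous functions, measure-preservation of the flow), but the execution differs in two places worth noting. For the flow continuity, the paper simply cites the joint continuity $X_t(x)\in C(\ol\Om\times\Rr)$ from \cite{MP} and leaves it at that; you instead build the uniform convergence of the \emph{inverse} flow explicitly via $X_{t,0}=X_{t_0,0}\circ X_{t,t_0}$ together with the H\"older bound. One caveat: Theorem~\ref{Yu} as stated gives H\"older regularity only for the forward map $X_t=X_{0,t}$, not for $X_{t_0,0}$; the latter of course follows by the identical log-Lipschitz argument (or by time reversal), but you should say so. For the weak-$*$ continuity the paper takes a different and somewhat cleaner route: it changes variables to the forward flow, writing $\int_\Om\om(t)f\,dx=\int_\Om\om_0\,f(X_t(\cdot))\,dx$ for $f\in C(\ol\Om)$, reads off continuity in $t$ directly from the joint continuity of $X_t$, and then passes to general $f\in L^1$ by density. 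This avoids layering the weak-$*$ statement on top of the strong $L^2$ result as you do. Finally, a small slip: in your $L^p$ density estimate the limits must be taken in the order $t\to t_0$ first (for fixed $n$), then $n\to\infty$; as written the order is reversed and the inequality becomes vacuous.
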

\begin{proof}
%We note that by the uniqueness of trajectories generated by Log-Lipschitz velocity fields (see \eqref{LL}), $X^{-1}_t=X_{-t}$ the backward flow. Thus \eqref{form:om:0} becomes
%\bq\label{form:om}
%\om(x, t)=\om_0(X_{-t}(x)).
%\eq
We first note that since   the velocity field $u$ is Log-Lipschitz (see \eqref{LL}), $X_t(x)\in C(\overline{\Om}\times \Rr)$ (see \cite{MP}). Therefore, if $\om_0\in C(\ol{\Om})$ then it is clear that $\om\in C(\Rr; L^p(\Om))$ for all $p\in [1, \infty]$. For $p\in [1, \infty)$ and $\om_0\in L^\infty(\Om)\subset L^p(\Om)$, using the fact that $C(\ol{\Om})$ is dense in $L^p(\Om)$ and $X_t$ is measure-preserving, we obtain $\om \in C(\Rr; L^p(\Om))$.

 For any $f\in C(\ol{\Om})$,  \eqref{form:om:0} yields
\[
g(t):=\int_\Om \om(x, t)f(x)dx=\int_\Om \om_0(X^{-1}_t(x))f(x)dx=\int_\Om \om_0(x)f(X_t(x))dx.
\]
Thus $g\in C(\Rr)$ since $X_t(x)\in C(\ol{\Om}\times \Rr)$. Since  $C(\ol{\Om})$ is dense in $L^1(\Om)$ and $X_t$ is measure-preserving, it follows that $t\mapsto \int_\Om \om(x, t)f(x)dx$ is continuous for all $f\in L^1(\Om)$. Therefore, $\om\in C_w(\Rr; L^\infty(\Om))$.
\end{proof}
 By virtue of Lemma \ref{lemm}, for every $t>0$, the solution map 
\bq\label{def:S}
S_t: L^\infty(\Om)\ni \om_0\mapsto \om(t) \in L^\infty(\Om)
\eq
is well defined. We prove that  $S_t$ is strongly continuous in $L^p(\Om)$ for all $p\in [1, \infty)$.
\begin{theo}\label{theo}
%Let $R,~T>0$ be arbitrary. For all $\om_0^1\in L^\infty(\Om)$ with norm less than $R$, the following holds. For all $\eps>0$, there exists $\delta=\delta(\eps, R, T, \om_0^1)>0$ such that for all $\om_0^2$ in $L^\infty(\Omega)$ with norms less than $R$, if  $\|\om_0^1-\om_0^2\|_{L^1(\Om)}<\delta$  then 
%\bq
%\|S_t(\om_0^1)-S(t)(\om_0^2)\|_{L^p(\Om)}<\eps
%\eq
Let $p\in [1, \infty)$. Let $\om_0$, $\om_0^n\in L^\infty(\Om)$ such that $(\om_0^n)_n$  converges to $\om_0$ in $L^p(\Om)$. Then for all $T>0$  we have
\bq\label{conv}
\lim_{n\to \infty}\sup_{t\in [-T, T]}\| S_t(\om_0^n)-S_t(\om_0)\|_{L^p(\Om)}=0.
\eq
\end{theo}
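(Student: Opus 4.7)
The plan is Lagrangian throughout: using $S_t(\om_0)(x) = \om_0(X^{-1}_t(x))$ and measure-preservation of $X^{-1}_t$, I reduce the $L^p$-convergence of vorticities to uniform convergence of the flow maps. First I reduce to uniformly bounded initial data by setting $M := \|\om_0\|_{L^\infty(\Om)}$ and $\wt\om_0^n := (\om_0^n \wedge M) \vee (-M)$. Since $|\om_0| \le M$ a.e., the pointwise inequalities $|\wt\om_0^n - \om_0| \le |\om_0^n - \om_0|$ and $|\om_0^n - \wt\om_0^n| \le |\om_0^n - \om_0|$ yield $\wt\om_0^n \to \om_0$ in $L^p$, $\om_0^n - \wt\om_0^n \to 0$ in $L^p$, and $\|\wt\om_0^n\|_{L^\infty} \le M$; interpolation then gives $\wt\om_0^n \to \om_0$ in every $L^r$ with $r \in [p, \infty)$.

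In the uniformly bounded case $\|\om_0^n\|_{L^\infty} \le M$, the velocities $u^n$ are uniformly log-Lipschitz, so the flows $X^{(n)}_t$ form an equicontinuous family on $\ol\Om \times [-T, T]$ (H\"older in $x$ with uniform exponent $\alpha^* = e^{-CTM}$, Lipschitz in $t$). Arzela--Ascoli produces a uniformly convergent subsequence $X^{(n_k)} \to Y$. For $\phi \in C(\ol\Om)$, change of variables combined with strong $L^r$-convergence of $\om_0^{n_k} \to \om_0$ (for any $r > 2$) yields
\[
\int_\Om \om^{n_k}(t)\,\phi\,dx = \int_\Om \om_0^{n_k}(z)\,\phi(X^{(n_k)}_t(z))\,dz \longrightarrow \int_\Om \om_0(z)\,\phi(Y_t(z))\,dz,
\]
so $\om^{n_k}(t)$ converges weakly to $\om_0 \circ Y^{-1}_t$ in $L^r$; together with $\|\om^{n_k}(t)\|_{L^r} = \|\om_0^{n_k}\|_{L^r} \to \|\om_0\|_{L^r}$ and uniform convexity of $L^r$, the convergence is strong. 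As $K(x,\cdot) \in L^{r'}$ for $r > 2$, one gets $u^{n_k}(\cdot, t) \to K * (\om_0 \circ Y^{-1}_t)$ uniformly on $\ol\Om$, and passing to the limit in $\dot X^{(n_k)}_t = u^{n_k}(X^{(n_k)}_t, t)$ identifies $Y$ as the Lagrangian flow of a Yudovich solution with datum $\om_0$. Uniqueness forces $Y = X$, so the whole sequence $X^{(n)}_t \to X_t$ uniformly. The desired convergence then follows from
\[
\|\om^n(t) - \om(t)\|_{L^p(\Om)} \le \|\om_0^n - \om_0\|_{L^p} + 2\|\om_0 - \phi\|_{L^p} + L_\phi\,|\Om|^{1/p}\,\|X^{(n),-1}_t - X^{-1}_t\|_{L^\infty(\Om)},
\]
for $\phi$ Lipschitz approximating $\om_0$ in $L^p$, the last factor being estimated through the uniform H\"older continuity of $X^{-1}_t$.

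The main obstacle is removing the truncation: applying the previous step to $\wt\om_0^n$ gives $\sup_t \|S_t(\wt\om_0^n) - S_t(\om_0)\|_{L^p} \to 0$, so one must show $\sup_t \|S_t(\om_0^n) - S_t(\wt\om_0^n)\|_{L^p} \to 0$. This is delicate because $\om_0^n$ has no uniform $L^\infty$ bound, rendering the Arzela--Ascoli framework inapplicable to a direct comparison of $X^{(n)}$ with $\wt X^{(n)}$. My plan is to rerun the compactness argument with $X^{(n)}$ compared directly against the limit flow $X$, exploiting that the log-Lipschitz constant entering the Osgood estimate depends only on $\|\om_0\|_{L^\infty}$ (finite). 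For $p > 2$ the velocities $u^n$ remain uniformly $L^\infty$-bounded through $\|u^n\|_{L^\infty} \le C\|\om_0^n\|_{L^p}$, and the splitting $\om^n - \om = (\om_0^n - \om_0) \circ X^{-1} + [\om_0^n \circ X^{(n),-1} - \om_0^n \circ X^{-1}]$ reduces $\|u^n - u\|_{L^\infty}$ to a term vanishing in $L^p$ plus a second term handled by approximating $\om_0$ (rather than $\om_0^n$, to keep moduli $n$-uniform) by Lipschitz functions; the resulting coupled Osgood system is then closed by a bootstrap. For $p \in [1, 2]$, the result follows from a $p_0 > 2$ case via the inclusion $L^{p_0}(\Om) \hookrightarrow L^p(\Om)$ on the bounded domain.
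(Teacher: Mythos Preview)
Your treatment of the uniformly bounded case via Arzel\`a--Ascoli and identification of the limit through uniqueness is correct, but it is essentially the compactness argument the paper reserves for its weak-$*$ result (Theorem~\ref{theo:1}), not for Theorem~\ref{theo}. The paper's proof of Theorem~\ref{theo} is direct and quantitative: it derives an $L^1$ estimate on the flow difference $\eta(t,r)=|\Om|^{-1}\int_\Om|X^1_{t,r}-X^2_{t,r}|\,dx$ via Osgood's lemma, with constants depending only on $\|\om_0^1\|_{L^\infty}$ and $\|\om_0^1-\om_0^2\|_{L^1}$ --- never on $\|\om_0^2\|_{L^\infty}$. The device is an asymmetric splitting of $u^1-u^2$ combined with the kernel bound $\int_\Om|K(x,a)-K(x,b)|\,dx\le C\varphi(|a-b|)$, which manufactures the modulus $\varphi$ inside the Osgood loop \emph{without} any Lipschitz approximation of the vorticity; mollification appears only at the final step, after $\eta$ is already controlled. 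This is exactly the phenomenon you gesture at (``the log-Lipschitz constant depends only on $\|\om_0\|_{L^\infty}$''), but carried out directly it eliminates the truncation, the compactness, and the case split on $p$ altogether.

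Your removal of the truncation has two real gaps. First, the ``coupled Osgood system closed by a bootstrap'' is not written out, and if the Lipschitz constant $L_\phi$ of the approximant enters the Osgood inequality as a coefficient, the resulting bound degenerates as $\phi\to\om_0$ (since $L_\phi\to\infty$); the double limit $n\to\infty$, $\phi\to\om_0$ cannot then be closed. The paper sidesteps this by never letting a Lipschitz constant into the Osgood estimate. Second, and fatally, your reduction of $p\in[1,2]$ to some $p_0>2$ via $L^{p_0}(\Om)\hookrightarrow L^p(\Om)$ goes the wrong way: to invoke the $p_0$-case you would need $\om_0^n-\wt\om_0^n\to 0$ in $L^{p_0}$, but the hypothesis gives only $L^p$ convergence, and with no uniform $L^\infty$ bound on $\om_0^n-\wt\om_0^n=(|\om_0^n|-M)_+\,\mathrm{sgn}\,\om_0^n$ there is no interpolation available. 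For instance, with $\om_0=0$ and $\om_0^n=n^{1/p}\mathbf 1_{E_n}$, $|E_n|=n^{-2}$, one has $\|\om_0^n\|_{L^p}\to 0$ while $\|\om_0^n\|_{L^{p_0}}\to\infty$ for every $p_0>2p$.
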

Moreover, $S_t$ is  continuous in the weak-$*$ topology of $L^\infty(\Om)$.
\begin{theo}\label{theo:1}
 If $\om_0^n\overset{\ast}{\rightharpoonup}  \om_0$ in $L^\infty(\Om)$, then $S_\cdot(\om_0^n)\wsc S_\cdot(\om_0)$ in $L^\infty(\Om\times (-T, T))$ for all $T>0$ and $S_t(\om_0^n)\wsc S_t(\om_0)$ in  $L^\infty(\Om)$ for all $t\in \Rr$. 
 \end{theo}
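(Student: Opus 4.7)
The plan is a compactness--uniqueness argument at the Lagrangian level. First I would set $M := \sup_n \|\om_0^n\|_{L^\infty(\Om)}$ (finite by Banach--Steinhaus), let $\om^n(t) = S_t(\om_0^n)$ with Biot--Savart velocity $u^n = K * \om^n$, Lagrangian flow $X^n_t$, and inverse $X^n_{t, 0}$. Theorem~\ref{Yu} gives $\|\om^n(t)\|_{L^\infty} \le M$ for all $t$, so $u^n$ is uniformly bounded and uniformly Log-Lipschitz in space; consequently $X^n_t$ and $X^n_{t,0}$ are uniformly Lipschitz in $t$ and uniformly H\"older in $x$ on $\ol{\Om} \times [-T, T]$, with exponent and constants depending only on $M$, $T$, and $\Om$. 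By Arzel\`a--Ascoli I would pass to a subsequence (not relabeled) along which $X^n_t \to Y_t$ and $X^n_{t, 0} \to Z_t$ uniformly on $\ol{\Om} \times [-T, T]$. Measure preservation and the identities $X^n_{t, 0} \circ X^n_t = X^n_t \circ X^n_{t, 0} = \mathrm{id}$ survive uniform limits, so $Y_t : \ol{\Om} \to \ol{\Om}$ is a measure-preserving homeomorphism with inverse $Z_t$, and $\tilde\om(x, t) := \om_0(Z_t(x))$ lies in $L^\infty$.

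Next I would pass to the limit in the Lagrangian identity
\[
\int_\Om \om^n(x, t) f(x)\, dx = \int_\Om \om_0^n(y)\, f(X^n_t(y))\, dy
\]
for $f \in C(\ol{\Om})$. Uniform convergence $f \circ X^n_t \to f \circ Y_t$, the hypothesis $\om_0^n \wsc \om_0$ in $L^\infty(\Om)$, and the uniform $L^\infty$ bound combine to give $\om^n(\cdot, t) \wsc \tilde\om(\cdot, t)$ tested against $C(\ol{\Om})$ for every $t$; density of $C(\ol{\Om})$ in $L^1(\Om)$ together with the uniform $L^\infty$ bound on $\om^n(\cdot, t)$ upgrades this to weak-$*$ convergence in $L^\infty(\Om)$. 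Since $|K(x, y)| \lesssim |x - y|^{-1}$ and $\Om$ is bounded, $K(x, \cdot) \in L^1(\Om)$ for each $x$, so this weak-$*$ convergence forces pointwise convergence $u^n(x, t) \to \tilde u(x, t) := (K * \tilde\om(\cdot, t))(x)$; uniform Log-Lipschitzness of $u^n(\cdot, t)$ then upgrades it to uniform convergence on $\ol{\Om}$. Passing to the limit via dominated convergence in
\[
X^n_t(x) = x + \int_0^t u^n(X^n_s(x), s)\, ds,
\]
using $X^n_s(x) \to Y_s(x)$ with $u^n(\cdot, s) \to \tilde u(\cdot, s)$ uniform, yields $Y_t(x) = x + \int_0^t \tilde u(Y_s(x), s)\, ds$. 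Hence $(\tilde\om, \tilde u, Y_t)$ is a Yudovich triple with initial datum $\om_0$, and the uniqueness clause of Theorem~\ref{Yu} forces $\tilde\om = \om$ and $Y_t = X_t$. Since every subsequence admits such a further subsequence, the full sequence satisfies $\om^n(t) \wsc \om(t)$ in $L^\infty(\Om)$ for every $t \in \Rr$. The weak-$*$ statement in $L^\infty(\Om \times (-T, T))$ then follows from the pointwise-in-$t$ convergence, the uniform $L^\infty$ bound, Fubini, and dominated convergence.

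The main obstacle I anticipate is an apparent circularity in the identification step: convergence of the flow $X^n_t$ depends on convergence of $u^n$, which depends on convergence of $\om^n$, which in the Lagrangian framework depends on convergence of the flow. The resolution is to compactify the flows first purely from equicontinuity (needing no vorticity information), then let the Log-Lipschitz regularity of the Biot--Savart operator transfer weak-$*$ convergence of vorticities into uniform convergence of velocities; Yudovich uniqueness closes the loop and selects the full-sequence limit.
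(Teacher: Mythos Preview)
Your argument is correct and follows the same compactness--uniqueness skeleton as the paper: uniform Log-Lipschitzness of $u^n$ gives equicontinuity of the flows, Arzel\`a--Ascoli produces a limiting flow, and Yudovich uniqueness identifies the limit. The mechanics of the identification differ, however, and your version is in some respects more direct. The paper first extracts a space--time weak-$*$ limit $\om^\infty$ of a subsequence of $\om^n$, defines $u^\infty = K*\om^\infty$ and its flow $X^\infty$, and then must show $Y = X^\infty$ by passing to the limit in the ODE using only the weak-$*$ convergence $u^{n_k}\wsc u^\infty$ in $L^\infty(\Om\times(-T,T))$; this requires arguing that $u^{n_k}(X^{n_k}_t(x),t)\wsc u^\infty(Y_t(x),t)$ in the weak-$*$ sense. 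You instead build the candidate $\tilde\om$ directly from the flow limit, and then exploit the observation that $K(x,\cdot)\in L^1(\Om)$ to convert weak-$*$ convergence of $\om^n(\cdot,t)$ into pointwise (hence, by equicontinuity, uniform) convergence of $u^n(\cdot,t)$, which makes the passage to the limit in the integral ODE a straightforward dominated convergence. This buys you a cleaner ODE limit and lets you prove the pointwise-in-$t$ statement first, deducing the space--time statement afterwards by Fubini; the paper does the reverse order. Both arguments are sound, and yours avoids the auxiliary space--time weak-$*$ extraction altogether.
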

It was obtained in \cite[Corollary 1]{CDE} that for the torus $\T^2$,  the solution map for Yudovich solutions is continuous in $L^p$ {\it when restricted to bounded sets of $L^\infty$}. Theorem \ref{theo}  dispenses with the restriction to bounded sets of $L^\infty$ and holds for domains with boundary. The proof in \cite{CDE} is Eulerian and relies on  $L^2$ energy estimates for the velocity and vorticity differences. On the other hand, our proof of Theorem \ref{theo}  is purely Lagrangian: $L^p$ estimates for the vorticity difference is deduced from an $L^1$ estimate for the difference of the flow maps. The latter is established by employing an idea in \cite{MP} for the uniqueness of Yudovich solutions. We remark that Theorem \ref{theo:1} is stated without proof in \cite{Sverak} and is used to deduce properties of the omega-limit set of  the 2D Euler equations.

 On the whole space $\Om=\Rr^2$, the same statement in Theorem \ref{Yu} holds with $L^\infty(\Om)$ replaced by $L^\infty_c(\Rr^2)$, the space of $L^\infty(\Rr^2)$ functions with compact support. Note however that the flow map $X_t$ is then only {\it locally} H\"older continuous with exponent $\exp(-C|t|\|\om_0\|_{L^1\cap L^\infty})$, where $C$ is a universal constant and $L^1\cap L^\infty\equiv L^1(\Rr^2)\cap L^\infty(\Rr^2)$ is equipped with the norm
 \[
 \| \cdot\|_{L^1\cap L^\infty}=\| \cdot\|_{L^1(\Rr^2)}+\|\cdot\|_{L^\infty(\Rr^2)}.
 \]
  We have the following version of Theorem \ref{theo:1}.
 \begin{theo}\label{theo:2}
 Assume that $\om_0 \in L^\infty_c(\Rr^2)$, $(\om_0^n)_n\subset L^\infty_c(\Rr^2)$ is bounded in $L^1(\Rr^2)$ and  $\om_0^n\wsc \om_0$ in $L^\infty(\Rr^2)$. Then   $S_\cdot(\om_0^n)\wsc S_\cdot(\om_0)$ in $L^\infty(\Om\times (-T, T))$ for all $T>0$ and $S_t(\om_0^n)\wsc S_t(\om_0)$ in $L^\infty(\Rr^2)$  for all $t\in \Rr$.
\end{theo}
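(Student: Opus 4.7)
My strategy is to adapt the Lagrangian compactness argument underlying Theorem \ref{theo:1} to the whole plane, where the lack of a uniform compact support on the vorticities is compensated by the assumed uniform $L^1$ bound. First, the weak-$*$ hypothesis and the uniform boundedness principle give $\sup_n\|\om_0^n\|_{L^\infty}<\infty$, and together with the $L^1$ bound this yields $M:=\sup_n\|\om_0^n\|_{L^1\cap L^\infty}<\infty$. By the $\Rr^2$ version of Theorem \ref{Yu}, $\|\om^n(t)\|_{L^1\cap L^\infty}$ and $\|u^n(t)\|_{L^\infty}$ are uniformly bounded, and the forward and backward flows $X_t^n$, $X_{t,0}^n$ are uniformly locally H\"older with exponent $\exp(-CMT)$ on $[-T,T]$ and uniformly Lipschitz in $t$ with $|X_t^n(y)-y|\le CMT$. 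Arzel\`a--Ascoli and a diagonal argument extract a subsequence along which $X_t^n\to Y_t$ and $X_{t,0}^n\to Z_t$ locally uniformly on $\Rr^2\times[-T,T]$ for every $T>0$. Passing to the limit in $X_t^n\circ X_{t,0}^n=\mathrm{id}$ and in $\int f(X_t^n(y))\,dy=\int f(y)\,dy$ for $f\in C_c(\Rr^2)$ shows that $Y_t$ is a measure-preserving homeomorphism of $\Rr^2$ with inverse $Z_t$.

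Next, I use the Lagrangian representation to reduce the weak-$*$ convergence of $\om^n(t)$ to the locally uniform convergence of the flows. For $h\in C_c(\Rr^2)$ with $\supp h\subset B_R$, \eqref{form:om:0} and area preservation give
\[
\int h\,\om^n(t)\,dx=\int \om_0^n(y)\,h(X_t^n(y))\,dy,
\]
and the bound $|X_t^n(y)-y|\le CMT$ confines this integrand and its counterpart for $Y_t$ to the fixed ball $B_{R+CMT}$. Decomposing
\[
\om_0^n\,h\circ X_t^n-\om_0\,h\circ Y_t=(\om_0^n-\om_0)\,h\circ Y_t+\om_0^n\bigl(h\circ X_t^n-h\circ Y_t\bigr),
\]
the first term tends to zero by $\om_0^n\wsc\om_0$ tested against $h\circ Y_t\in L^1(\Rr^2)$, while the second does so by the uniform convergence $X_t^n\to Y_t$ on $B_{R+CMT}$ together with $\|\om_0^n\|_\infty\le M$. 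Density of $C_c(\Rr^2)$ in $L^1(\Rr^2)$ and the uniform $L^\infty$ bound on $\om^n(t)$ then upgrade this to $\om^n(t)\wsc\tilde\om(t):=\om_0\circ Y_t^{-1}$ in $L^\infty(\Rr^2)$ for every $t$.

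The main obstacle is to identify $\tilde\om$ with $\om$ via Yudovich uniqueness. I would show that $(\tilde\om,\tilde u,Y_t)$ with $\tilde u:=K*\tilde\om$ is a Yudovich solution with data $\om_0$, which reduces to passing to the limit in $X_t^n(x)=x+\int_0^tu^n(X_s^n(x),s)\,ds$. The delicate point is the convergence $u^n\to\tilde u$ on $\Rr^2$, since the Biot--Savart kernel belongs to no single $L^q(\Rr^2)$. I would split
\[
K(x-y)=K(x-y)\chi_{|y-x|<\rho}+K(x-y)\chi_{\rho\le|y-x|\le R}+K(x-y)\chi_{|y-x|>R};
\]
the first piece contributes $O(\rho M)$ to both $u^n(x,t)$ and $\tilde u(x,t)$ using $\|\om^n\|_\infty,\|\tilde\om\|_\infty\le M$, the third contributes $O(M/R)$ using $|K|\le C/|\cdot|$ and the uniform $L^1$ bound, and the middle piece lies in $L^1\cap L^\infty$ so $\om^n(t)\wsc\tilde\om(t)$ yields its convergence. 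Sending $\rho\to 0$ and $R\to\infty$ gives $u^n(x,t)\to\tilde u(x,t)$ pointwise, upgraded to locally uniform convergence by the uniform log-Lipschitz estimate for $u^n$. Passing to the limit in the integrated ODE yields $\dot Y_t=\tilde u(Y_t,t)$, so $(\tilde\om,\tilde u,Y_t)$ is a Yudovich solution with data $\om_0$; uniqueness forces $\tilde\om=\om$ and $Y_t=X_t$. A standard subsequence argument promotes this to convergence of the full sequence, and the space-time weak-$*$ convergence in $L^\infty(\Rr^2\times(-T,T))$ follows from the pointwise-in-$t$ convergence via dominated convergence against tensor-product test functions, which are dense in $L^1(\Rr^2\times(-T,T))$.
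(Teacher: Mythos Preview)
Your argument is correct and follows the same overall Lagrangian compactness/uniqueness scheme as the paper's proof. The only genuinely different ingredient is how you handle the fact that the Biot--Savart kernel $K$ is not in any single $L^q(\Rr^2)$, which is exactly the obstacle the paper isolates. The paper dualizes: for $f\in C_c(\Rr^2\times(-T,T))$ it writes $\int u^n f = \int \om^n g$ with $g=K*f$, shows $g\in L^\infty_t L^3_x$ by splitting $K$ into $|x-y|\le 1$ and $|x-y|>1$, and then invokes the weak $L^{3/2}$ convergence of $\om^{n_k}$ (obtained by interpolation between the uniform $L^1$ and $L^\infty$ bounds) to conclude $u^{n_k}\wsc u^\infty$ in space--time $L^\infty$; it then passes to the limit in the flow ODE in the weak sense, as in Theorem~\ref{theo:1}. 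You instead use a three--annulus splitting of $K(x-\cdot)$ and the already--established pointwise--in--$t$ weak--$*$ convergence $\om^n(t)\wsc\tilde\om(t)$ to get \emph{pointwise} convergence $u^n(x,t)\to\tilde u(x,t)$, which the uniform log--Lipschitz bound upgrades to locally uniform convergence in $x$; the ODE limit then follows by dominated convergence in $s$. Your route is slightly more elementary and yields a stronger intermediate conclusion on the velocities, at the cost of having to first establish the pointwise--in--$t$ vorticity convergence; the paper's duality route stays entirely within the weak--$*$ framework and more closely parallels the bounded--domain proof. A minor structural difference is that you prove pointwise--in--$t$ convergence first and deduce the space--time statement afterwards, while the paper proceeds in the opposite order.
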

Under the hypothesis of Theorem \ref{theo:2}, for all $p\in [1, \infty]$ we have that $\| S_t(\om^n_0)\|_{L^p}=\| \om_0^n\|_{L^p}$ is uniformly bounded  by interpolation. Therefore, the conclusion in Theorem \ref{theo:2}  implies $S_t(\om_0^n)\wsc S_t(\om_0)$ in $\mathcal{M}(\Rr^2)$, the space of signed Radon measures on $\Rr^2$, and $S_t(\om_0^n)\wc S_t(\om_0)$ in $L^p(\Om)$ for all $p\in (1, \infty)$. 
\begin{rema}
As we have mentioned earlier, the notion of solution in Theorem \ref{Yu} does not involve test functions. On the other hand, if $\om$ is such a solution, then for any $\phi\in C^1(\ol{\Om}\times [t_1, t_2])$ we have
\[
\begin{aligned}
&\int_{t_1}^{t_2}\int_\Om\om(x, t)\p_t\phi(x, t)dxdt\\
&=\int_{t_1}^{t_2}\int_\Om\om_0(x)(\p_t\phi)(X_t(x), t)dxdt\\
&=\int_{t_1}^{t_2}\int_\Om\om_0(x)\p_t[\phi(X_t(x), t)]dxdt-\int_{t_1}^{t_2}\int_\Om\om_0(x)\na\phi(X_t(x), t)\cdot \p_t X_t(x)dxdt\\
&=\int_\Om\om_0(x)[\phi(X_{t_2}(x), {t_2})-\phi(X_{t_1}(x), {t_1})]dx-\int_{t_1}^{t_2}\int_\Om\om_0(x)\na\phi(X_t(x), t)\cdot u(X_t(x), t)dxdt\\
&=\int_\Om\om(x, t_2)\phi(x, t_2)-\om(x, t_1)\phi(x, {t_1})dx-\int_{t_1}^{t_2}\int_\Om\om(x, t) u(x, t)\cdot \na\phi(x, t)dxdt.
\end{aligned}
\]
Thus $\om$ obeys the weak form 
\bq\label{weakform}
\int_{t_1}^{t_2}\int_\Om\om(x, t)\left[\p_t\phi(x, t)+ u(x, t)\cdot \na\phi(x, t)\right]dxdt=\int_\Om\om(x, t_2)\phi(x, t_2)-\om(x, t_1)\phi(x, {t_1})dx.
\eq
%Conversely, assume that a function $\om\in L^\infty((t_1, t_2); L^\infty(\Om))$ satisfies the weak form \eqref{weakform}. Then defining $u(t)=K*\om(t)$ and the flow $X_t$ generated by $u$, we have that $X_t$ is H\"older continuous, measure-preserving and invertible with $X^{-1}_t=X_{-t}$. By reversing the above calculation we obtain 
%\[
%\int_{t_1}^{t_2}\int_\Om\om(x, t)\p_t\phi(x, t)dxdt=\int_{t_1}^{t_2}\int_\Om\om_0(x)(\p_t\phi)(X_t(x), t)dxdt
%\]
\end{rema}
\section{Proof of Theorem \ref{theo}}
We first recall the following estimates for the Biot-Savart kernel $K$.
\begin{lemm}[\protect{\cite{Yu}}]
There exists $C$ depending only on $\Om$ such that for all $x$, $y$, $a$, $b\in \Om$, we have
\begin{align}\label{estK:1}
&|K(x, y)|\le C|x-y|^{-1},\\ \label{estK:2}
&\int_\Om |K(x, a)-K(x, b)|dx\le C\varphi(|a-b|),
\end{align}
where $\varphi$ is the Log-Lipschitz modulus of continuity
\bq
\varphi(r)=r(1-\ln r)\quad\text{if } 0<r\le 1,\quad \varphi(r)=1\quad\text{if } r> 1.
\eq
\end{lemm}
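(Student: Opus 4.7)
The plan is to exploit the explicit structure of the Dirichlet Green's function on the $C^2$ bounded domain $\Om$. Since $K(x,y) = \na_x^\perp G_D(x,y)$ with
\[
G_D(x,y) = -\frac{1}{2\pi}\log|x-y| + h(x,y),
\]
where $h(\cdot,y)$ is harmonic in $\Om$ and adjusted so that $G_D(\cdot,y) = 0$ on $\p\Om$, we obtain
\[
K(x,y) = -\frac{1}{2\pi}\frac{(x-y)^\perp}{|x-y|^2} + \na_x^\perp h(x,y).
\]
The $C^2$ regularity of $\p\Om$, via classical layer-potential or Schauder estimates for the regular part of the Green's function, ensures that $\na_x h$ and $\na_y\na_x h$ are bounded uniformly on $\ol{\Om}\times\ol{\Om}$. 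Estimate \eqref{estK:1} follows immediately, since the singular term contributes $(2\pi|x-y|)^{-1}$ and the regular term is bounded.

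For \eqref{estK:2}, set $r=|a-b|$. The case $r>1$ reduces, via the triangle inequality and \eqref{estK:1}, to the finite integral $\int_\Om (|x-a|^{-1}+|x-b|^{-1})\,dx$, whose value depends only on the diameter of $\Om$. Assume therefore $r\in(0,1]$ and split
\[
\Om = A \cup B,\quad A=\{x\in\Om: |x-a|\le 2r \text{ or } |x-b|\le 2r\},\quad B=\Om\setminus A.
\]
On $A$, I would bound the integrand pointwise by $|K(x,a)|+|K(x,b)|\le C(|x-a|^{-1}+|x-b|^{-1})$ and integrate the planar $1/|x-\cdot|$ singularity over a disk of radius $2r$, producing a contribution of order $r$.

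On $B$, every point $z$ on the segment $[a,b]$ satisfies $|x-z|\ge|x-a|-r\ge r$, so parametrizing $y$ along $[a,b]$ and combining the fundamental theorem of calculus with the pointwise bound $|\na_y K(x,y)|\le C|x-y|^{-2}+C$ yields $|K(x,a)-K(x,b)|\le Cr(|x-z|^{-2}+1)$ for some $z\in[a,b]$. Integrating $|x-z|^{-2}$ over $\{x\in\Om: |x-z|\ge r\}$ contributes a logarithmic factor $\log(\mathrm{diam}(\Om)/r)$, and adding the near-field contribution gives the total bound $Cr(1-\log r) = C\varphi(r)$, as desired.

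The main technical obstacle is establishing the uniform boundedness of $\na_x h$ and $\na_y\na_x h$ up to $\p\Om$, which is where the $C^2$ hypothesis on the boundary is genuinely used; in the full plane this issue disappears, since $h\equiv 0$ and one works directly with the translation-invariant Biot–Savart kernel, whose Log-Lipschitz property reduces to the above near-field/far-field split applied to the explicit kernel $-(x-y)^\perp/(2\pi|x-y|^2)$.
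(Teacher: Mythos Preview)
The paper does not give its own proof of this lemma; it is simply cited from Yudovich \cite{Yu}. So there is nothing to compare your argument against, and I can only comment on the proposal itself.

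Your overall strategy --- split into a near region $A$ and a far region $B$, use the pointwise bound \eqref{estK:1} on $A$, and use a $\na_y K$ bound plus the mean value theorem on $B$ --- is the standard one and is correct in outline. The gap is in the justification of the required pointwise estimates via the regular part $h$. The claim that $\na_x h$ and $\na_y\na_x h$ are uniformly bounded on $\ol\Om\times\ol\Om$ is false. Already on the unit disk one has
\[
h(x,y)=\frac{1}{2\pi}\log\bigl(|y|\,|x-y^*|\bigr),\qquad y^*=\frac{y}{|y|^2},
\]
so that $|\na_x h(x,y)|=(2\pi|x-y^*|)^{-1}$, which blows up as both $x$ and $y$ approach the same boundary point. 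What \emph{is} true, and what your argument actually needs, are the full Green's-function estimates
\[
|\na_x G_D(x,y)|\le \frac{C}{|x-y|},\qquad |\na_y\na_x G_D(x,y)|\le \frac{C}{|x-y|^{2}},
\]
valid on bounded $C^{1,1}$ (hence $C^2$) domains. These do not follow from boundedness of the regular part; they require either a local reflection/flattening argument near $\p\Om$ or an appeal to the Gr\"uter--Widman estimates. Once you replace the incorrect claim about $h$ with these standard bounds on $G_D$ itself, the near-field/far-field computation goes through exactly as you wrote it.
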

As a direct consequence of \eqref{estK:1} and \eqref{estK:2}, if $\om\in L^\infty(\Om)$ then $u=K*\om$ is bounded and Log-Lipschitz:
\begin{align}\label{u:uni}
&\| u\|_{L^\infty(\Om)}\le C\|\om\|_{L^\infty(\Om)},\\
\label{LL}
&|u(x)-u(y)|\le C\|\om\|_{L^\infty}\varphi(|x-y|)\quad\forall x,~y\in \Om.
\end{align}
Let $(\om^j, u^j, X^j_t)$, $j=1, 2$ be two solutions of \eqref{Euler} with initial data $\om_0^j\in L^\infty(\Om)$.% Let $T>0$ be arbitrary and consider $\om^j$ at positive times $t\in [0, T]$ since the Euler equation \eqref{Euler} is time-reversible. 
For notational simplicity we shall write $L^p\equiv L^p(\Om)$. Fix $p\in [1, \infty)$. We have the elementary inequalities 
\bq
(a+b)^p\le 2^{p-1}(a^p+b^p),\quad (a+b+c)^p\le 2^{p-1}a^p+2^{2p-2}(b^p+c^p)\quad\forall a, b, c\in \Rr_+.
\eq
Since the flow maps $X^j_{s, t}$ are measure-preserving, we have
 \bq\label{est:0}\begin{aligned}
 \| \om^1(t)-\om^2(t)\|_{L^p}^p&=\int_\Om|\om_0^1(X^1_{t, 0}(x))-\om_0^2(X^2_{t, 0}(x))|^pdx\\
 &\le 2^{p-1}\int_\Om|\om_0^1(X^1_{t, 0}(x))-\om_0^1(X^2_{t, 0}(x))|^pdx\\
 &\quad+2^{p-1}\int_\Om|\om_0^1(X^2_{t, 0}(x))-\om_0^2(X^2_{t, 0}(x))|^pdx\\
 & \le 2^{p-1}\int_\Om|\om_0^1(X^1_{t, 0}(x))-\om_0^1(X^2_{t, 0}(x))|^pdx+ 2^{p-1}\| \om^1_0-\om^2_0\|_{L^p}^p.
 \end{aligned}
 \eq
% \bq\label{est:0}\begin{aligned}
% \| \om^1(t)-\om^2(t)\|_{L^1}&=\int_\Om|\om_0^1(X^1_{-t}(x))-\om_0^2(X^2_{-t}(x))|dx\\
% &\le \int_\Om|\om_0^1(X^1_{-t}(x))-\om_0^1(X^2_{-t}(x))|dx+\int_\Om|\om_0^1(X^2_{-t}(x))-\om_0^2(X^2_{-t}(x))|dx\\
% & \le \int_\Om|\om_0^1(X^1_{-t}(x))-\om_0^1(X^2_{-t}(x))|dx+ \| \om^1_0-\om^2_0\|_{L^1}.
% \end{aligned}
% \eq
We extend $\om_0^1$ to zero outside $\Om$ and  approximate $\om_0^1$ by $\om_0^1*\rho_\eps$, where $\rho_\eps$ is the standard mollifier. It follows from \eqref{est:0} that
\bq\label{est:1}
\begin{aligned}
 \| \om^1(t)-\om^2(t)\|_{L^p}^p&\le 2^{p-1}\| \om^1_0-\om^2_0\|_{L^p}^p+2^{3p-2}\| \om_0^1*\rho_\eps-\om_0^1\|_{L^1}\\
 &\quad+2^{2p-2}\int_\Om|(\om_0^1*\rho_\eps)(X^1_{t, 0}(x))-(\om_0^1*\rho_\eps)(X^2_{t, 0}(x))|^pdx\\
& \le 2^{p-1}\| \om^1_0-\om^2_0\|_{L^p}^p+2^{3p-2}\| \om_0^1*\rho_\eps-\om_0^1\|_{L^1}\\
&\quad+2^{2p-2}\|\om_0^1*\rho_\eps\|^p_{\dot C^{1/p}}\int_\Om |X^1_{t, 0}(x)-X^2_{t, 0}(x)|dx.
 \end{aligned}
 \eq
%\bq\label{est:1}
%\begin{aligned}
% \| \om^1(t)-\om^2(t)\|_{L^1}&\le \| \om^1_0-\om^2_0\|_{L^1}+2\| \om_0^1*\rho_\eps-\om_0^1\|_{L^1}+\int_\Om|(\om_0^1*\rho_\eps)(X^1_{-t}(x))-(\om_0^1*\rho_\eps)(X^2_{-t}(x))|dx\\
%& \le \| \om^1_0-\om^2_0\|_{L^1}+2\| \om_0^1*\rho_\eps-\om_0^1\|_{L^1}+\|\na (\om_0^1*\rho_\eps)\|_{L^\infty}\int_\Om |X^1_{-t}(x)-X^2_{-t}(x)|dx.
% \end{aligned}
% \eq
 Set $F(x, t, r):=|X^1_{t, r}(x)-X^2_{t, r}(x)|$. Integrating \eqref{def:Xst} with respect to $t$ we deduce
 \bq\label{split:F}
 \begin{aligned}
 F(x, t, r)%&\le \int_{-t}^0 |u^1(X^1_{s}(x), s)-u^2(X^2_{s}(x), s)|ds\\
 &\le \left|\int_t^r |u^1(X^1_{t, s}(x), s)-u^1(X^2_{t, s}(x), s)|ds\right|+\left| \int_t^r|u^1(X^2_{t, s}(x), s)-u^2(X^2_{t, s}(x), s)|ds\right|\\
 &:=I_1(x, t, r)+I_2(x, t, r).
   \end{aligned}
   \eq
   The Log-Lipschitz bound \eqref{LL} yields
   \bq\label{est:I1}
   |I_1(x, t, r)|\le C\|\om_0^1\|_{L^\infty}\left|\int_t^r\varphi(F(x, t, s))ds\right|.
   \eq
   As for $I_2$ we use the definition $u^j=K*\om^j$, \eqref{form:om} together the fact that the maps $X^j_{s, t}$ are measure-preserving, giving
   \begin{align*}
 |I_2(x, t, r)|&\le \left|\int_t^r\left|\int_\Om K(X^2_{t, s}(x), y)\om_0^1(X^1_{s, 0}(y))-K(X^2_{t, s}(x), y)\om_0^2(X^2_{s, 0}(y))dy\right|ds\right|\\
 &=\left|\int_t^r\left|\int_\Om |K(X^2_{t, s}(x), X^1_{s}(y))\om_0^1(y)-K(X^2_{t, s}(x), X^2_{s}(y))\om_0^2(y)dy\right|ds\right|\\
 &\le \left|\int_t^r\int_\Om |K(X^2_{t, s}(x), X^1_{s}(y))-K(X^2_{t, s}(x), X^2_{s}(y))||\om_0^1(y)|dyds\right|\\
 &\quad +\left|\int_t^r\int_\Om |K(X^2_{t, s}(x), X^2_{s}(y))|\om_0^1(y)-\om_0^2(y)|dyds\right|:=I_2^a+I_2^b.
\end{align*}
Integrating $I_2$ in $x$ and using the fact that $X^2_{t, s}$ is measure-preserving, we deduce
\[
\begin{aligned}
\int_\Om |I_2^a(x, t, r)|dx&=\left|\int_t^r\int_\Om\int_\Om |K(x, X^1_{s}(y))-K(x, X^2_{s}(y))|dx|\om_0^1(y)|dyds\right|\\
&\le C\|\om_0^1\|_{L^\infty}\left|\int_t^r\int_\Om \varphi(F(y, 0, s))dyds\right|,
\end{aligned}
\]
where we have used \eqref{estK:2} in the second estimate. Since $\varphi$ is concave, Jensen's inequality implies
\bq\label{est:I2a}
\frac{1}{|\Om|}\int_\Om |I_2^a(x, t, r)|dx\le C\|\om_0^1\|_{L^\infty}\left|\int_t^r\varphi\left(\frac{1}{|\Om|}\int_\Om F(y, 0, s)dy\right)ds\right|.
\eq
On the other hand, \eqref{estK:1} gives
\bq\label{est:I2b}
\begin{aligned}
\int_\Om |I_2^b(x, t, r)|dx&\le C\left|\int_t^r\int_\Om\int_\Om \frac{1}{|X_{t, s}^2(x)-X^2_{s}(y)|}dx |\om_0^1(y)-\om_0^2(y)|dyds\right|\\
&=C\left|\int_t^r\int_\Om\int_\Om \frac{1}{|x-X^2_{ s}(y)|}dx |\om_0^1(y)-\om_0^2(y)|dyds\right|\\
&\le C|t-r| \|\om_0^1-\om_0^2\|_{L^1}.
\end{aligned}
\eq
A combination of \eqref{split:F}, \eqref{est:I1}, \eqref{est:I2a} and \eqref{est:I2b} implies that $\eta(t, r):=|\Om|^{-1}\int_\Om F(x, t, r)dx$ satisfies 
\bq\label{growth:eta}
\eta(t, r)\le  C|t-r|\| \om_0^1-\om_0^2\|_{L^1} +C\|\om_0^1\|_{L^\infty}\left\{\left|\int_t^r \varphi(\eta(t, s))ds\right|+\left|\int_t^r \varphi(\eta(0, s))ds\right|\right\}
\eq
for all $t, r\in \Rr$, where $C=C(\Om)$. %Consequently, for $t>0$, $\wt \eta(t):=\eta(t, 0)$ satisfies%Consequently, $\wt\eta(t)=\eta(-t)$ satisfies  
%\bq\label{growth:eta}
%\wt\eta(t)\le  Ct\| \om_0^1-\om_0^2\|_{L^1} +C\|\om_0^1\|_{L^\infty}\int_{0}^t \varphi(\wt\eta(s))ds.
%\eq

Let $T>0$ be arbitrary. We first consider $t=0$ in \eqref{growth:eta}. We choose $\om_0^j$ such that $CT\| \om_0^1-\om_0^2\|_{L^1}<1$. Since $\eta(0, 0)=0$ and $\eta(0, \cdot)$ is continuous, there exists a maximal time $T_1\in (0, T]$ such that  $ \eta(0, s)<1$  for  all $s\in [0, T_1)$. Consequently, in \eqref{growth:eta} we have $\varphi(\eta(0, s))=\eta(0, s)[1-\ln(\eta(0, s))]$ provided that $s\in [0, T_1)$. An application of Osgood's lemma \cite[Lemma 3.4]{BCD} yields
\bq\label{Osgood:bound}
\eta(0, r)\le e^{1-\exp(-C|r|\|\om_0^1\|_{L^\infty})}\big(CT\| \om_0^1-\om_0^2\|_{L^1}\big)^{\exp(-C|r|\|\om_0^1\|_{L^\infty})}\quad\forall r\in [0, T_1].
\eq
Using \eqref{Osgood:bound} with $r=T_1$ we find that
% $\eta(0, T_1)<1$ if 
%\[
%CT\| \om_0^1-\om_0^2\|_{L^1}<e^{1-\exp(CT\|\om_0^1\|_{L^\infty})}.
%\]
%Therefore, if   
if \bq\label{smallcd}
CT\| \om_0^1-\om_0^2\|_{L^1}<e^{1-\exp(CT\|\om_0^1\|_{L^\infty})}
\eq
then $\eta(0, T_1)<1$, and hence $T_1=T$ and \eqref{Osgood:bound} holds for all $r\in [0, T]$. By the same argument, we obtain \eqref{Osgood:bound} for all $r\in [-T, T]$.  Since $\varphi$ is increasing, inserting \eqref{Osgood:bound} into the right-hand side of \eqref{growth:eta}, we deduce 
\bq\label{Osgood:2}
\eta(t, r)\le  \Phi_{T, \|\om_0^1\|_{L^\infty}}(\| \om_0^1-\om_0^2\|_{L^1})+C\|\om_0^1\|_{L^\infty}\left|\int_t^r \varphi(\eta(t, s))ds\right|
\eq
for all $t, r\in [-T, T]$, where
\[
\Phi_{T, \|\om_0^1\|_{L^\infty}}(z)=CTz +CT\|\om_0^1\|_{L^\infty}\varphi\left(e\big(CTz\big)^{\exp(-CT\|\om_0^1\|_{L^\infty})}\right)
\]
with $C$ depending only on $\Omega$.  Clearly $\Phi(z)\to 0$ as $z\to 0$. Similarly to \eqref{Osgood:bound}, we can apply Osgood's lemma to \eqref{Osgood:2} and obtain
\bq\label{est:etatr}
\eta(t, r)\le e^{1-\exp(-C|t-r|\|\om_0^1\|_{L^\infty})}\Phi_{T, \|\om_0^1\|_{L^\infty}}(\| \om_0^1-\om_0^2\|_{L^1})^{\exp(-C|t-r|\|\om_0^1\|_{L^\infty})}
\eq
for all $t, r\in [-T, T]$ provided that 
\bq\label{smallcd:2}
\Phi_{T, \|\om_0^1\|_{L^\infty}}(\| \om_0^1-\om_0^2\|_{L^1})<e^{1-\exp(2CT\|\om_0^1\|_{L^\infty})}.
\eq
 By virtue of  \eqref{est:etatr},  \eqref{est:1} yields
\bq\label{est:2}
\begin{aligned}
&\| \om^1(t)-\om^2(t)\|_{L^p}^p\le 2^{p-1}\| \om^1_0-\om^2_0\|_{L^p}^p+2^{3p-2}\| \om_0^1*\rho_\eps-\om_0^1\|_{L^1}\\
&\quad +C\|\om_0^1*\rho_\eps\|^p_{\dot C^{1/p}}\Phi_{T, \|\om_0^1\|_{L^\infty}}(\| \om_0^1-\om_0^2\|_{L^1})^{\exp(-CT\|\om_0^1\|_{L^\infty})}
\end{aligned}
\eq
for all $t\in [-T, T]$, where $C$ depends only on $\Om$. 

To obtain Theorem \ref{theo}, let $\om_0$, $\om_0^n\in L^\infty(\Om)$ such that $(\om_0^n)_n$  converges to $\om_0$ in $L^p(\Om)$. For $n\ge N$ sufficiently large, the smallness conditions \eqref{smallcd} and \eqref{smallcd:2} hold for $\om_0-\om_0^n$, so that \eqref{est:2} holds for $S_t(\om_0)-S_t(\om_0^n)$. In \eqref{est:2}, taking the supremum over $t\in [0, T]$, then letting $n\to \infty$ followed by $\eps \to 0$ , we conclude that 
\bq\label{est:4}
\lim_{n\to \infty}\sup_{t\in [-T, T]}\| S_t(\om_0^n)-S_t(\om_0)\|_{L^p(\Om)}=0.
\eq
The proof of Theorem \ref{theo} is complete.
\section{Proof of  Theorem \ref{theo:1}}
Assume that $\om_0^n \overset{\ast}{\rightharpoonup}  \om_0$ in $L^\infty(\Om)$. Let $(\om^n, u^n, X^n_t)$ (resp. $(\om, u, X_t)$) be the Yudovich solution of \eqref{Euler} with initial data $\om_0^n$ (resp. $\om_0$). Fix $T>0$ arbitrary. With $M=\sup_{n}\|\om_0^n\|_{L^\infty}<\infty$ we have $\sup_n \| \om^n\|_{L^\infty(\Omega\times (-T, T))}\le M$. Thus there exists a subsequence $\om^{n_k}\wsc \om^\infty$ in $L^\infty(\Omega\times (-T, T))$. Define
\begin{align*}
&u^\infty(t)=K*\om^\infty(t),\\
&\frac{d}{dt}X^\infty_t(x)=u^\infty(X^\infty_t(x), t),\quad X^\infty_0(x)=x\quad\forall x\in \ol{\Om}.
\end{align*}
Note that $u^\infty$ is divergence-free and Log-Lipschitz, whence $X^\infty_t$ is measure-preserving.  We claim that 
\bq\label{claim}
\om^\infty(x)=\om_0(X^\infty_{t, 0}(x)).
\eq
{\bf 1.} To prove  \eqref{claim}, we first use the $L^\infty$ bound \eqref{u:uni} to have
\[
%\sup_n|X^n_t(x)-x|\le CMt,\quad
 \sup_n\left|\frac{d}{dt}X^n_t(x)\right|\le CM\quad \forall x\in \Om,\quad C=C(\Om).
\]
 Recall in addition from Theorem \ref{Yu} that each $X^n_t$ is H\"older continuous with exponent  
 \[
 \exp(-C|t|\|\om_0\|_{L^\infty(\Om)})\ge \exp(-CMT),\quad C=C(\Om).
 \] 
  It follows that the sequence $X^n_t(x)$ is uniformly bounded in $C^{\alpha}(\overline{\Om}\times [-T, T])$ for some $\alpha=\alpha(M, T, \Om)$. By the Arzel\`a-Ascoli theorem, the subsequence $X^{n_k}$ has a subsequence  $X^{n_{k_\ell}}\to Y$ in $C(\overline{\Om}\times [-T, T])$. Using this strong convergence, we now prove that 
    \bq\label{prop:0}
\om^{n_k}_0(X^{n_{k_\ell}}_{t, 0}(x))\wsc \om_0(Y_{t, 0}(x))\quad\text{in } L^\infty(\Omega\times (-T, T)),\quad Y_t(x)\equiv Y(x, t).
  \eq
  Indeed, for any $f\in C(\overline{\Om}\times[-T, T])$, we have
  \begin{align*}
\int_{-T}^T\int_\Om \om_0^{n_{k_\ell}}(X^{n_{k_\ell}}_{t, 0}(x))f(x, t)dxdt
&= \int_{-T}^T\int_\Om \om_0^{n_{k_\ell}}(x)f(X^{n_{k_\ell}}_t(x), t)dxdt\\
&=\int_{-T}^T\int_\Om \om_0^{n_{k_\ell}}(x)f(Y_t(x), t)dxdt\\
&\quad+\int_{-T}^T\int_\Om \om_0^{n_{k_\ell}}(x)[f(X^{n_{k_\ell}}_t(x), t)-f(Y_t(x), t)]dxdt\\
&:=I_1+I_2.
  \end{align*}
Since $\om_0^n\wsc \om$ in $L^\infty(\Om)$ and $f(Y_t(\cdot), t)\in L^1(\Om)$, 
\[
\lim_{\ell\to \infty}\int_\Om \om_0^{n_{k_\ell}}(x)f(Y_t(x), t)dx=\int_\Om \om_0(x)f(Y_t(x), t)dx.
\]
In addition, we have
\[
\left|\int_\Om \om_0^{n_{k_\ell}}(x)f(Y_t(x), t)dx\right| \le M\|f(\cdot, t)\|_{L^1}\in L^1((-T, T)),
\]
so that the dominated convergence theorem gives
\[
\lim_{\ell\to \infty}I_1=\int_{-T}^T\int_\Om \om_0(x)f(Y_t(x), t)dxdt.
\]
Since $X^n\to Y$ in $C(\ol{\Om}\times [-T, T])$ and $|\om_0^{n_{k_\ell}}(x)|\le M$, $I_2$ converges to $0$ by uniform convergence on the compact set $\ol{\Om} \times [-T, T]$.  Consequently, 
\bq\label{prop:1}
\begin{aligned}
\lim_{\ell\to \infty}\int_{-T}^T\int_\Om \om_0^{n_{k_\ell}}(X^{n_{k_\ell}}_{t, 0}(x))f(x, t)dxdt&=\int_{-T}^T\int_\Om \om_0(x)f(Y_t(x), t)dx\\
&=\int_{-T}^T\int_\Om \om_0(Y_{t, 0}(x))f(x, t)dx.
\end{aligned}
\eq
%where in the second equality we have used the fact that $X^\infty_t$ is measure-preserving.
Since $C(\overline{\Om}\times[-T, T])$ is dense in $L^1(\Om\times(-T, T))$ and $\om_0^{n}(X^{n}_{t, 0}(x))$ is uniformly bounded in $L^\infty(\Om\times (-T, T))$, \eqref{prop:1} implies \eqref{prop:0}. On the other hand, $ \om^{n_k}_0(X^{n_{k_\ell}}_{t, 0}(x))=\om^{n_{k_\ell}}(x, t) \wsc \om^\infty(x, t)$, so that \eqref{prop:0} implies 
\bq\label{prop:3}
\om^\infty(x, t)=\om_0(Y_{t, 0}(x)).
\eq
Thus \eqref{claim} would follow from \eqref{prop:3} provided that
\bq\label{prop:4}
Y_t(x)=X^\infty_t(x).
\eq
To prove \eqref{prop:4} we start with 
 \bq\label{prop:5}
 \frac{d}{dt}X^n_t(x)=u^n(X^n_t(x), t),\quad u^n(t)=K*\om^n(t).
 \eq
 Using that $\om^{n_k}\wsc \om^\infty$ in $L^\infty(\Omega\times (-T, T))$ and $\int_\Om |K(x, y)|dy\le C(\Om)$, we deduce $u^{n_k}\wsc u^\infty$ in $L^\infty(\Omega\times (-T, T))$. Arguing as in the proof of \eqref{prop:0} we obtain 
 \[
 u^{n_k}(X^{n_k}_t(x), t)\wsc  u^\infty(Y_t(x), t)\quad\text{in } L^\infty(\Omega\times (-T, T)),
 \]
 whence \eqref{prop:5} gives $\frac{d}{dt}Y_t(x)= u^\infty(Y_t(x), t)$. Therefore, $Y_t(x)=X^\infty_t(x)$ by the uniqueness of trajectories generated by Log-Lipschitz velocity fields. This finishes the proof of \eqref{prop:4} and hence of \eqref{claim}.
  
 {\bf 2.} With \eqref{claim} established, the triple $(\om^\infty, u^\infty, X^\infty_t)$ is a Yudovich solution of \eqref{Euler} with initial data $\om^\infty\vert_{t=0}=\om_0$. By the uniqueness part in Theorem \ref{Yu}, $(\om^\infty, u^\infty, X^\infty_t)\equiv (\om, u, X_t)$. In fact, the above argument shows that every subsequence of $\om^n$ has a subsequence converging weakly-$*$ to $\om$ in $L^\infty(\Omega\times (-T, T))$. It follows that the entire sequence $\om^n\wsc \om$  in $L^\infty(\Omega\times (-T, T))$.

 Now for each $t\in \Rr$, $\om^n(t)$ is well-defined in $L^\infty(\Om)$ by virtue of Lemma \ref{lemm}. Moreover, $\| \om^n(t)\|_{L^\infty}\le M$, whence  $\om^{n_k}\wsc  h(t)$ in $L^\infty(\Om)$ for some subsequence $n_k$ which a priori depends on $t$. For any $f\in C(\ol{\Om})$ we have
\begin{align*}
\int_\Om \om^n(x, t)f(x)dx&= \int_\Om \om^n_0(x)f(X^n_t(x))dx\to  \int_\Om \om_0(x)f(X_t(x))dx
%&=\int_\Om [\om^n_0(x)-\om_0(x)]f(X_t(x))dx+\int_\Om \om^n_0(x)[f(X^n_t(x))-f(X_t(x))dx
\end{align*}
in views of the facts that $\om_0^n\wsc \om_0$ in $L^\infty(\Om)$ and $X^n_t\to X^\infty_t\equiv X_t$ in $C(\ol{\Om})$. It follows that 
\[
\int_\Om h(x, t)f(x)dx= \int_\Om \om_0(x)f(X_t(x))dx=\int_\Om \om_0(X_{t, 0}(x))f(x)dx
\]
 and thus $h(x, t)= \om_0(X_{t, 0}(x))=\om(x, t)$ a.e. $x\in \Om$. In fact, we have proved that every subsequence of $\om^n(\cdot, t)$ has a subsequence converging weakly-$*$ to $\om(\cdot, t)$ in $L^\infty(\Om)$. Therefore, the entire sequence $\om^n(\cdot, t)\wsc \om(\cdot, t)$  in $L^\infty(\Om)$. This concludes the proof of Theorem \ref{theo:1}.
\section{Proof of Theorem \ref{theo:2}}
We follow closely the proof of Theorem \ref{theo:1} and use the same notation whenever possible. We note that $\om^n(t)$ has compact support for all $t\in \Rr$ but $X^n_t$ does not in general. Since $X^n$ is uniformly  H\"older continuous on any compact set of $\Rr^2_x\times \Rr_t$, any subsequence $n_k$ has a  subsequence $n_{k_\ell}$ such that 
\bq\label{conv:BR}
\forall R>0,\quad X^{n_{k_\ell}}\to Y\quad\text{in } C(\ol{B_R}\times [-T, T])
\eq
  by the  Arzel\`a-Ascoli theorem and a diagonal procedure. Here $B_R$ denotes the ball of radius $R$ centered at $0\in \Rr^2$.  To prove \eqref{prop:0} we take $f\in C_c(\Rr^2\times (-T, T))$, a dense subspace of $L^1(\Om\times (-T, T))$, and only consider $I_2$ since the above argument for $I_1$ does not make use of the boundedness of $\Om$. If $\supp f\subset B_R\times (-T, T)$ then
  \[
|I_2|\le M\int_{-T}^T\int_{X^{n_{k_\ell}}_{t, 0}(B_R)\cup Y_{t, 0}(B_R)} |f(X^{n_{k_\ell}}_t(x), t)-f(Y_t(x), t)|dxdt,
\]
where $M=\sup_n\| \om_0^n\|_{L^\infty}$. We have
\[
\| u^n\|_{L^\infty}\le C\| \om^n\|_{L^1\cap L^\infty}= C\|\om_0^n\|_{L^1\cap L^\infty}\le CN,
\]
where $N:=\sup_n\|\om_0^n\|_{L^1\cap L^\infty}+\|\om_0\|_{L^1\cap L^\infty}$. This implies
\[
X^{n_{k_\ell}}_{t, 0}(B_R)\cup Y_{t, 0}(B_R) \subset B_{R+TCN}\quad\forall |t|\le T,
\]
whence
\[
|I_2|\le M\int_{-T}^T\int_{B_{R+TCN}} |f(X^{n_{k_\ell}}_t(x), t)-f(Y_t(x), t)|dxdt.
\]
Therefore, $\lim_{\ell\to\infty} I_2=0$ by the uniform convergence \eqref{conv:BR} on the compact set $\ol{B_{R+TCN}}\times [-T, T]$. This yields \eqref{prop:0}. 

Regarding \eqref{prop:5}, we prove that $\om^{n_k}\wsc \om^\infty$ in $L^\infty(\Rr^2\times (-T, T))$ implies $u^{n_k}\wsc u^\infty:=K*\om^\infty$ in $L^\infty(\Rr^2\times (-T, T))$. Note that the Biot-Savart kernel for $\Rr^2$ is 
\[
K(x, y)\equiv K(x-y),\quad K(x)=\frac{(-x_2, x_1)}{2\pi |x|^2}
\]
and $K$ does not belong to $L^1(\Rr^2)$. For any $f\in C_c(\Rr^2\times (-T, T))$ we have 
\bq\label{om:to:u}
\begin{aligned}
\int_{-T}^T\int_{\Rr^2}u^{n_k}(x, t)f(x, t)dxdt&=\int_{-T}^T \int_{\Rr^2}\om^{n_k}(y, t)\int_{\Rr^2}K(x-y)f(x, t)dxdydt\\
&:=\int_{-T}^T \om^{n_k}(y, t)g(y, t)dydt.
\end{aligned}
\eq
Since $K\notin L^1(\Rr^2)$, we do not have $g(t)\in L^1(\Rr^2)$ to use the weak-$*$ convergence of $\om^{n_k}$. On the other hand, upon splitting the $x$-integration in $g$ into $|x-y|\le 1$ and $|x-y|>1$ and applying suitable Young inequalities, we deduce $\| g(t)\|_{L^3}\le C\| f(t)\|_{L^1\cap L^3}$ and thus $g\in L^\infty((-T, T); L^3)$. Now, $\om^n$ is uniformly bounded in $L^\infty(\Rr; L^\tdm)$ by interpolation, whence $\om^{n_k}\wc \om^\infty$ in $L^\tdm(\Rr^2\times (-T, T))$. It then follows from \eqref{om:to:u} that
\begin{align*}
\lim_{k\to \infty}\int_{-T}^T\int_{\Rr^2}u^{n_k}(x, t)f(x, t)dxdt&=\int_{-T}^T\int_{\Rr^2} \om^\infty(y, t)g(y, t)dydt\\
&=\int_{-T}^T\int_{\Rr^2}u^\infty (x, t)f(x, t)dxdt
\end{align*}
for all $f\in C_c(\Rr^2\times (-T, T))$. Using this, the fact that $u_n$ is uniformly bounded in $L^\infty(\Rr^2)$ and a density argument, we conclude $u^{n_k}\wsc u^\infty$ in $L^\infty(\Rr^2\times (-T, T))$. The remainder of the proof follows along the same lines of the proof of Theorem \ref{theo:1}.

\vspace{.1in}
\noindent{\bf{Acknowledgment.}} 
The work of HQN was partially supported by NSF grant DMS-1907776. I would like to thank Theodore D.  Drivas for pointing out the reference \cite{Sverak} and for interesting discussions on Yudovich theory.
\vspace{1mm}

\end{document}